\newsavebox{\largestimage}
\numberwithin{figure}{section}
\tikzset{snake it/.style={decorate, decoration=snake}}
\DeclareMathOperator{\Diff}{Diff}
\DeclareMathOperator{\Iso}{Isom}
\DeclareMathOperator{\Ric}{\mathrm{Ric}}
\newtheorem{theorem}{Theorem}
\newtheorem{corollary}[theorem]{Corollary}  
		\newtheorem{thm}{Theorem}[section]
		\newtheorem{lem}[thm]{Lemma}
		\newtheorem{prop}[thm]{Proposition}
	\theoremstyle{definition}
		\newtheorem{remark}[thm]{Remark}
 \newtheoremstyle{TheoremNum}
        {\topsep}{\topsep}              
        {\itshape}                      
        {}                              
        {\bfseries}                     
        {.}                             
        { }                             
        {\thmname{#1}\thmnote{ \bfseries #3}}
    \theoremstyle{TheoremNum}
\numberwithin{equation}{section}
\theoremstyle{definition}
\newtheorem*{ack}{Acknowledgements}
\newtheorem{ex}[thm]{Example}
\newcommand{\R}{\mathbb{R}} 
\newcommand{\Z}{\mathbb{Z}} 
\DeclareMathOperator{\Met}{Met}
\DeclareMathOperator{\SO}{SO}
\begin{document}

\author[D.~Corro]{Diego Corro$^{\ast\dagger}$}
\address[D.~CORRO]{Institut f\"ur Algebra und Geometrie, Karlsruher Institut f\"ur Technologie (KIT), Karlsruhe, Germany.}
\curraddr{Instituto de Matemáticas, sede Oaxaca, Universidad Nacional Autó\-noma de México (UNAM), Mexico.}
\email{\href{mailto:diego.corro.math@gmail.com}
{diego.corro.math@gmail.com}}
\email{\href{mailto:diego.corro@im.unam.mx}
{diego.corro@im.unam.mx}}

\author[K.~Garcia]{Karla Garcia$^\ast$}
\address[K.~GARCIA]{Institut f\"ur Algebra und Geometrie, Karlsruher Institut f\"ur Technologie (KIT), Karlsruhe, Germany.}
\curraddr{Facultad de Ciencias, Universidad Nacional Autónoma de México (UNAM), Mexico.}
\email{\href{mailto:ohmu@ciencias.unam.mx}
{ohmu@ciencias.unam.mx}}

\author[M.~G\"{u}nther]{Martin~G\"{u}nther$^\ast$}
\address[M.~G\"{U}NTHER]{Institut f\"ur Algebra und Geometrie, Karlsruher Institut f\"ur Technologie (KIT), Karlsruhe, Germany.}
\email{\href{mailto:martin.guenther@kit.edu}
{martin.guenther@kit.edu}}

\author[J.-B.~Korda\ss]{Jan-Bernhard~Korda\ss$^\ast$}
\address[J.-B.~KORDA\ss]{Département de Mathématiques, Université de Fribourg, Fribourg, Switzerland.}
\email{\href{mailto:jan-bernhard.kordass@unifr.ch}
{jan-bernhard.kordass@unifr.ch}}

\thanks{$^{\ast}$ Supported by the DFG (281869850, RTG 2229 ``Asymptotic Invariants and Limits of Groups and Spaces'').}

\thanks{$^{\dagger}$ Supported by a DGAPA Postdoctoral Fellowship of the Institute of Mathematics, UNAM}


\title[Even-dim. spher. space form bundles and 1/4-pinched Riem. metrics
]{Bundles with even-dimensional spherical space form as fibers and fiberwise quarter pinched Riemannian metrics
}
\date{\today}


\subjclass[2010]{57R22, 53C10}
\keywords{$1/4$-pinched metrics, spherical space form bundle}

\setlength{\overfullrule}{5pt}
	\begin{abstract}
Let $E$ be a smooth bundle with fiber an $n$-dimensional real projective space $\mathbb{R}P^n$. We show that, if every fiber carries a positively curved pointwise strongly $1/4$-pinched Riemannian metric that varies continuously with respect to its base point, then the structure group of the bundle reduces to the isometry group of the standard round metric on $\mathbb{R}P^n$.
	\end{abstract}

\maketitle



\section{Main results}


Recently, the Generalized Smale Conjecture has been proven for $M$ 
the $3$-dimensional sphere, or any $3$-dimensional spherical space form; that is, the diffeomorphism group of $M$ is homotopy equivalent to the isometry group of a metric of constant sectional curvature (see \cite{BamlerKleiner2017,BamlerKleiner2019,
  Hatcher1983,Smale1959}). Farrell, Gang, Knopf and Ontaneda in \cite{FarrellGangKnopfOntaneda2017} proved using Ricci flow that a similar result holds for the structure group of a smooth sphere bundle over a compact manifold, when each fiber has a $1/4$-pinched Riemannian metric which depends continuously on the base point, i.e.\ a \emph{fiberwise metric}. 
Henceforth, by a $1/4$-pinched metric we will always refer to a metric with \emph{positive} sectional curvatures bounded between $\frac{1}{4}$ and $1$.

In the present work, we study the more general case of  smooth fiber bundles with  an arbitrary spherical space as fiber, equipped with a  \emph{fiberwise pointwise strongly  $1/4$-pinched Riemannian metric}.

By a \emph{smooth bundle} over a space $B$, we mean a locally trivial bundle over $B$ whose structural group is $\Diff(F)$, the group of self-diffeomorphisms of $F$ with the smooth topology. We assume that each fiber is equipped with a Riemannian metric such that at any point of the fiber,  the ratio of the maximal to the minimal sectional curvatures is strictly less than $4$

We point out that for any  $n$-dimensional spherical space form $F$, the standard round metric $\widetilde{\sigma}$ of the unit round sphere $S^n\subset \R^{n+1}$  induces a distinguished metric $\sigma$ on $F$, which we call the \emph{standard round metric} of $F$.
In the case when the bundle has fibers diffeomorphic to a real projective space and has a fiberwise pointwise strongly $1/4$-pinched Riemannian metric, we conclude that the structure group of the bundle reduces:

\begin{theorem}\label{Theorem: Main theorem}
Let $E\to B$ be a smooth fiber bundle over a locally compact space $B$, whose fibers are real projective spaces. 
If the bundle admits  pointwise strongly $1/4$-pinched fiberwise metrics, then its structure group reduces to the isometry group of the standard round metric.
\end{theorem}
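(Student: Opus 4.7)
The plan is to reduce to the sphere bundle case treated in Farrell--Gang--Knopf--Ontaneda \cite{FarrellGangKnopfOntaneda2017} by passing to the fiberwise universal cover $\Sp^n \to \R P^n$, applying their theorem, and then descending the structure group reduction back to $E$.

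First I would construct the fiberwise double cover. Since every self-diffeomorphism of $\R P^n$ lifts to a $\Z_2$-equivariant self-diffeomorphism of $\Sp^n$, the cocycle of transition functions for $E$ lifts to a cocycle defining a smooth sphere bundle $\tilde\pi : \tilde E \to B$ together with a two-fold smooth covering $q : \tilde E \to E$ commuting with the projections to $B$; the associated deck transformation $a: \tilde E \to \tilde E$ restricts in each fiber to a free involution. The pointwise strongly $1/4$-pinched fiberwise metric on $E$ pulls back via $q$ to a pointwise strongly $1/4$-pinched fiberwise metric on $\tilde E$, since $q$ is a fiberwise local isometry and sectional curvatures are preserved under local isometries; continuity in $b \in B$ is inherited from local product charts. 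Applying \cite{FarrellGangKnopfOntaneda2017} to $\tilde E \to B$ then reduces its structure group from $\Diff(\Sp^n)$ to $\Isom(\Sp^n, \widetilde\sigma) = O(n+1)$.

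To descend the reduction to $E$, it suffices to show that the deck involution $a$ acts on each sphere fiber by a central element of $O(n+1)$. For this I would invoke the fact that the FGKO reduction is produced by running fiberwise Ricci flow to its round limit, that the initial fiberwise metric on $\tilde E$ is $a$-invariant by construction, and that naturality of Ricci flow under isometries then forces the limit round metric on each sphere fiber to be $a$-invariant. Hence $a$ restricts in each fiber to an isometric free involution of $(\Sp^n, \widetilde\sigma)$. Any such orthogonal involution must equal $-\Id$ by diagonalizability, and since $-\Id$ is central in $O(n+1)$, the $O(n+1)$-structure on $\tilde E$ descends to an $O(n+1)/\{\pm\Id\} = \Isom(\R P^n, \sigma)$-structure on the quotient $E = \tilde E/\langle a \rangle$, which is precisely the reduction claimed in the theorem.

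The main obstacle is verifying this $\langle a \rangle$-equivariance of the FGKO reduction. While in principle it follows from naturality and uniqueness of Ricci flow on a compact manifold with an isometric symmetry, one must check that the \emph{continuous family} of round limits over $B$ produced in \cite{FarrellGangKnopfOntaneda2017} is globally $a$-invariant as a family, not merely fiberwise; this is where the argument has to be matched carefully against the concrete construction in \cite{FarrellGangKnopfOntaneda2017}.
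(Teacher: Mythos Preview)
Your approach has a genuine gap at the very first step: the fiberwise double cover $\tilde E \to B$ need not exist. While every self-diffeomorphism of $\R P^n$ does lift to a $\Z_2$-equivariant diffeomorphism of $\Sp^n$, the lift is only determined up to sign, and there is no continuous group-homomorphic section of $\Diff_{\Z_2}(\Sp^n) \to \Diff(\R P^n)$ in general. Lifting the individual transition maps $\alpha_{ij}$ to $\tilde\alpha_{ij}$ therefore only yields $\tilde\alpha_{jk}\tilde\alpha_{ij} = \pm\tilde\alpha_{ik}$, and the resulting $\Z_2$-valued \v{C}ech $2$-cocycle can represent a nontrivial class in $H^2(B;\Z_2)$. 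The paper makes this obstruction explicit (Theorem~\ref{T:Obstructions to Covering fibrations}) and constructs in Example~\ref{Ex: Example} smooth $\R P^3$-bundles over closed simply-connected $4$-manifolds admitting no sphere-bundle cover extending $\Sp^3 \to \R P^3$; your argument cannot be run on these. Indeed, Theorem~\ref{Theorem: Covering} is precisely the statement one gets \emph{under} your extra hypothesis that $\tilde E$ exists (plus a normalizer condition), and Theorem~\ref{Theorem: Main theorem} is meant to cover the cases where it does not.

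The paper circumvents this by never forming a global sphere bundle. Instead it proves a direct $\R P^n$-analog of the FGKO key lemma: a continuous map $\phi\colon \Met^1(\R P^n) \to \Diff(\R P^n)$ with $\phi(g)\colon (\R P^n, g) \to (\R P^n, \sigma)$ an isometry (Theorem~\ref{T: All round projective spaces are isometric to the standard one}). The proof of \emph{that} theorem uses exactly the equivariance observation you isolate in your second step---any isometry $(\Sp^n,\tilde g) \to (\Sp^n,\tilde\sigma)$ between round metrics must intertwine the antipodal maps, since a free isometric involution of a round sphere is necessarily $-\Id$---but applies it pointwise, to a single lifted isometry $\tilde\phi_g$ constructed for each round metric $g$, so that $\tilde\phi_g$ descends to $\R P^n$. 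After flowing the fiberwise metric to round via Proposition~\ref{P: pointwise stron 1/4-pinched flows to round metric}, one then modifies the transition functions of $E$ directly at the $\R P^n$ level, exactly as FGKO do for spheres, without ever needing $\tilde E$. So your core idea is right, but it must be deployed metric-by-metric inside the key lemma rather than globally on a cover that may fail to exist.
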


Recall that an even dimensional  spherical space form is either diffeomorphic to the sphere or a real projective space $\mathbb{R}P^n$ (cf.\ \cite[Proposition 4.4, p.166]{doCarmo}. Thus, for the particular case of even dimensional fibers, 
we obtain the following corollary, which is a direct generalization of the conclusions of \cite{FarrellGangKnopfOntaneda2017}:

\begin{corollary}\label{Theorem: even dimension}
Let $E\to B$ be a smooth fiber bundle over a locally compact space $B$, with fibers homeomorphic to an even dimensional spherical space form $F$. 
If the bundle admits pointwise strongly  $1/4$-pinched fiberwise metrics, then the structure group reduces to the isometry group of the standard round metric.
\end{corollary}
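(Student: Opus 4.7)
The plan is to reduce Corollary~\ref{Theorem: even dimension} to two separate cases using the classification of even-dimensional spherical space forms recalled immediately before its statement: by \cite[Proposition~4.4, p.~166]{doCarmo}, any even-dimensional spherical space form is diffeomorphic to either the standard sphere $S^n$ or the real projective space $\mathbb{R}P^n$. Consequently, only these two fiber types can arise.

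Since the bundle $E\to B$ is locally trivial, the diffeomorphism type of the fiber is locally constant on $B$ and therefore constant on every connected component. It thus suffices to establish the reduction of structure group over each connected component of $B$ separately, and reassemble. On a given component, two cases occur. If the fiber is diffeomorphic to $S^n$, then the reduction of structure group to the isometry group of the round metric is precisely the main result of Farrell--Gang--Knopf--Ontaneda \cite{FarrellGangKnopfOntaneda2017}, whose hypothesis is met because by assumption each fiber carries a pointwise strongly $1/4$-pinched Riemannian metric depending continuously on the base point. If instead the fiber is diffeomorphic to $\mathbb{R}P^n$, the reduction is exactly the content of Theorem~\ref{Theorem: Main theorem}.

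I do not expect a genuine obstacle here: the only point requiring attention is that the notion of ``standard round metric'' matches across the two cases. This is automatic, since by definition the standard round metric $\sigma$ on $\mathbb{R}P^n$ is induced by the standard round metric $\widetilde{\sigma}$ on $S^n$ via the antipodal quotient, so that in both subcases the phrase ``isometry group of the standard round metric'' refers to the same canonical Riemannian structure. With this identification in place, Corollary~\ref{Theorem: even dimension} follows by combining \cite{FarrellGangKnopfOntaneda2017} in the sphere case with Theorem~\ref{Theorem: Main theorem} in the projective-space case.
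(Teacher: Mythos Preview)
Your proposal is correct and follows exactly the approach the paper intends: the text preceding the corollary already records that even-dimensional spherical space forms are either $S^n$ or $\mathbb{R}P^n$, so the corollary is obtained by invoking \cite{FarrellGangKnopfOntaneda2017} in the sphere case and Theorem~\ref{Theorem: Main theorem} in the projective-space case. Your extra remark about connected components is harmless but unnecessary here, since the statement fixes a single fiber type $F$.
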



For the particular case of  dimension $3$,  the work of  Bamler and Kleiner \cite{BamlerKleiner2017,BamlerKleiner2019}  implies that a reduction of the structure group as in Corollary~\ref{Theorem: even dimension} holds for any smooth bundle with fiber a $3$-dimensional spherical space form, independently of whether the bundle carries a $1/4$-pinched fiberwise metric. This same observation holds for arbitrary $\R P^2$-bundles, see \cite[Proof of Theorem~3.4, p. 364]{RosenbergStolz}.


For bundles with spherical space form as fibers of odd dimension we give some sufficient conditions to obtain the same conclusions of Theorem~\ref{Theorem: Main theorem}. In \cite{FarrellGangKnopfOntaneda2017}, Farrell, Gang, Knopf and Ontaneda show that for any trivialization $\alpha_i\colon p^{-1}(U_i)\to U_i\times F$ of a smooth fiber bundle $S^n\to \tilde{E} \overset{p}{\to} B$ equipped with a fiberwise metric $\{ g_b \}_{b\in B} $ of constant sectional curvature $1$ there is a continuous map $\phi_i\colon U_i \to \Diff(S^n)$, which for each $b \in U_i$ takes the pushforward metric $(\alpha_{i})_* g_b$ on $S^n$ and then sends $b$ to an isometry $\phi_i (b) : (S^n, (\alpha_{i})_* g_b) \rightarrow (S^n, \widetilde{\sigma} ) $. Denote by $\Gamma$ any finite subgroup of $\mathrm{O}(n+1)$ acting freely by isometries on $S^n$ with respect to the standard round metric, and  let $F$ be the spherical space form  $S^n/\Gamma$. Given a smooth fiber bundle $p\colon E\to B$ with fiber $F$, we say that an $n$-sphere bundle  $\widetilde{p}\colon\widetilde{E}\to B$ is a \emph{covering bundle} of $p\colon E\to B$, if there exist a map $\pi\colon \widetilde{E}\to E$ which is an extension of the covering $S^n\to F$ of the fibers. Observe that in general $\widetilde{E}$ is not the universal covering of $E$, since for any sphere bundle over $B$ the fundamental group of the total space is isomorphic to the fundamental space of the base.

In the next theorem we give a sufficient condition for the structure group of a smooth fiber bundle, with fibers an $n$-dimensional space form to reduce to the isometry group of the standard round metric on the fiber.


\begin{theorem}\label{Theorem: Covering}
Let $F$ be an $n$-dimensional spherical space form with fundamental group $\Gamma$. Denote by $I= \mathrm{Isom}(F,\sigma)$ the group of isometries of $F$ with respect to the standard round metric $\sigma$. Let $F\to E\overset{p}{\to} B$ be a smooth fiber bundle over a locally compact space $B$, with pointwise strongly quarter pinched fiberwise metrics.
Assume that there exist a covering bundle $S^n\to \tilde{E}\overset{\tilde{p}}{\to} B$ extending  $\pi \colon S^n\to F$. Consider $\phi_i\colon U_i\to \Diff(S^n)$ the continuous maps described above. When the images $\phi_i(U_i)$ are contained in the normalizer of $\Gamma$ in $\Diff(S^n)$,  i.e.\ $\phi_i(U_i)\subset N_{\Diff(S^n)}(\Gamma)$, for all indices $i$, then 
the structure group of $p\colon E\to B$ reduces to $I$.
\end{theorem}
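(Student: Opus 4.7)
My approach is to modify the given local trivializations $\alpha_i$ of $E$ by fiberwise composition with descendants of the maps $\phi_i$, and then verify that the new transition functions take values in $I$.

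First, I would observe that because $\phi_i(b)\in N_{\Diff(S^n)}(\Gamma)$ by hypothesis, it permutes $\Gamma$-orbits on $S^n$ and therefore descends to a diffeomorphism $\bar\phi_i(b)\colon F\to F$; continuity of $\bar\phi_i\colon U_i\to \Diff(F)$ is inherited from that of $\phi_i$. I would then define new local trivializations of $E$ by applying $\bar\phi_i(b)$ on the fiber over $b$, namely $\beta_i=(\mathrm{id}\times \bar\phi_i(\cdot))\circ \alpha_i$. The new transition functions become
\[
h_{ij}(b)=\bar\phi_j(b)\circ g_{ij}(b)\circ \bar\phi_i(b)^{-1},
\]
so the goal reduces to showing $h_{ij}(b)\in I$ for every $b\in U_i\cap U_j$.

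Next, I would lift the picture to the covering sphere bundle $\tilde E$. Choosing trivializations $\tilde\alpha_i$ compatible with $\pi$, the transition functions $\tilde g_{ij}(b)\in \Diff(S^n)$ are $\Gamma$-equivariant lifts of $g_{ij}(b)$, hence automatically lie in $N_{\Diff(S^n)}(\Gamma)$. Consider the lift $\tilde h_{ij}(b):=\phi_j(b)\circ \tilde g_{ij}(b)\circ \phi_i(b)^{-1}$ of $h_{ij}(b)$. The key observation is that $\tilde h_{ij}(b)$ is an isometry of the round sphere: since $\tilde g_{ij}(b)$ is an isometry from $(S^n,(\tilde\alpha_i)_*\tilde g_b)$ to $(S^n,(\tilde\alpha_j)_*\tilde g_b)$ and each $\phi_k(b)$ is, by the construction recalled before the statement, an isometry from $(S^n,(\tilde\alpha_k)_*\tilde g_b)$ to $(S^n,\widetilde\sigma)$, the composition lies in $\Isom(S^n,\widetilde\sigma)=\mathrm{O}(n+1)$.

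Combining the two observations, $\tilde h_{ij}(b)\in \mathrm{O}(n+1)\cap N_{\Diff(S^n)}(\Gamma)=N_{\mathrm{O}(n+1)}(\Gamma)$. Descending through $\pi$ yields $h_{ij}(b)\in N_{\mathrm{O}(n+1)}(\Gamma)/\Gamma$, and since $\Isom(F,\sigma)=N_{\mathrm{O}(n+1)}(\Gamma)/\Gamma$, this is the required reduction of the structure group. The main obstacle I anticipate is the bookkeeping around the covering bundle: one must verify that trivializations of $\tilde E$ can be chosen compatibly with those of $E$, that the lifts of transition functions (well-defined only up to precomposition by elements of $\Gamma$) always remain in the normalizer, and that the cocycle condition survives the modification. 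This is essentially formal once the notion of covering bundle is unpacked, and the substance of the argument lies in using the hypothesis $\phi_i(U_i)\subset N_{\Diff(S^n)}(\Gamma)$ precisely to guarantee the descent.
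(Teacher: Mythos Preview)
Your argument is correct and matches the paper's approach: the paper first invokes the normalized Ricci flow (its Proposition~2.1) to pass from $1/4$-pinched to round fiberwise metrics, then proves exactly your reduction as a separate lemma (Lemma~3.2), forming $\widetilde\beta_{ij}(b)=\phi_j(b)\circ\widetilde\alpha_{ij}(b)\circ\phi_i(b)^{-1}\in\mathrm{O}(n+1)$, using the normalizer hypothesis to see it preserves $\Gamma$-orbits, and descending to $\Isom(F,\sigma)$. Your write-up absorbs the Ricci-flow step into ``the construction recalled before the statement''; making that invocation explicit would align it exactly with the paper's two-line proof of the theorem.
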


We point out that in general, for an arbitrary smooth fiber bundle $F\to E\to B$ with spherical space form as fibers, an extension $\tilde{E}\to E$ of the covering $S^n\to F$ may not exist.  In Section~\ref{S: spherical space forms} we exhibit  a family of  smooth $\R P^3$-bundles over closed simply-connected $4$-manifolds which do not admit an extension of the covering $S^3\to \R P^3$.

The proofs of both Theorems~\ref{Theorem: Main theorem} and \ref{Theorem: Covering}  rely on the fact that, locally,  the spherical space form fiber bundle under consideration is finitely covered by a trivial sphere bundle. With this observation, several results presented in \cite{FarrellGangKnopfOntaneda2017} can be generalized. The main difference between Theorem~\ref{Theorem: Main theorem} and Theorem~\ref{Theorem: Covering} is  an equivariance problem. Namely, when the fibers are projective spaces, we can do all constructions in a $\Z_2$ equivariant fashion. It is not clear whether the analogous constructions can be done in a continuous fashion for an arbitrary finite group $\Gamma$ of $\mathrm{O}(n+1)$ acting freely and by isometries on the unit round  $n$-sphere, and thus we impose the additional, sufficient conditions in Theorem~\ref{Theorem: Covering}.

The present article is organized as follows: First we give the preliminaries needed for our discussion and recall several facts from \cite{FarrellGangKnopfOntaneda2017}, for the sake of completeness. In Section~\ref{S: spherical space forms}, we prove Theorem~\ref{Theorem: Covering}, and in the last section we prove Theorem~\ref{Theorem: Main theorem}.


\begin{ack} We thank Fernando Galaz-Garcia, for comments on the first versions of the present manuscript, and Wilderich \linebreak Tuschmann for useful conversations.
\end{ack}

\section{Preliminaries}\label{S: Preliminaries}
%

For a closed smooth manifold $M$, we denote by $\Met(M)$ the space of smooth Riemannian metrics on $M$ equipped with the \emph{$C^{\infty}$ Whitney topology}. 
For a survey on the properties of this space, the interested reader can consult, for example, \cite{CorroKordass2019}, \cite{Hirsch}, \cite{KrieglMichor}, \cite{TuschmannWraith}.
A Riemannian metric on $M$ is \emph{pointwise strongly $1/4$-pinched} if for any point $p\in M$, the ratio of the maximal to the minimal sectional curvatures at $p$ is strictly less than $4$.
We denote by $\Met^{1/4}(M)$ the space of pointwise strongly $1/4$-pinched metrics with the subspace topology induced by $\Met^{1/4}(M)\subseteq \Met(M)$.
In an analogous fashion we define $\Met^{1}(M)$ to be the \emph{space of round metrics}, i.e.\ metrics with constant sectional curvature $1$, with the induced topology given by $\Met^1(M)\subset\Met(M)$.

Let $F\to E \overset{p}{\to} M$ be a smooth fiber bundle with compact fiber $F$.
A \emph{fiberwise metric on the bundle} is a continuous family of Riemannian metrics $g_x$ on every fiber $F_x$. Here, by ``continuous'' we mean that for any local trivialization $\alpha\colon  p^{-1}(U)\to U \times F$ of $p\colon E\to B$,  the induced map $U \to \Met(F)$ given by $x\mapsto \alpha_{\ast} g_x$ is continuous. Observe that for two local trivalizations, $\alpha_i\colon p^{-1}(U_i)\to U_i\times F$ and $\alpha_j\colon p^{-1}(U_j)\to U_i\times F$, the transition map $\alpha_{ij}(b) = \alpha_j\circ \alpha_{i}^{-1}\colon (U_i\cap U_j) \times F\to (U_i\cap U_j)\times F$ is of the form
\[
	\alpha_{ij}(b,v) = (b,\alpha_{ij}(b)(v)).
\]
The definition of fiberwise Riemannian metric implies that for any $b\in U_i\cap U_j$ the map $\alpha_{ij}(b) \colon (F,(\alpha_i)_\ast \, g_b)\to (F,(\alpha_j)_\ast \, g_b)$ is an isometry.

The \emph{normalized Ricci flow} on a smooth compact $n$-dimensional manifold $M$ is the solution of the following differential equation:
\[
\frac{\partial}{\partial t}\, g(t) = -2 \Ric_{g(t)}+\frac{2}{n}\left(\frac{\int_M \mathrm{Scal}(g(t))d\mathrm{vol}(g(t))}{\int_M d\mathrm{vol}(g(t))}\right)g(t),
\]
where $g(t)$ is a time-dependent family of Riemannian metrics on $M$. In  \cite{BrendleSchoen2009}, Brendle and Schoen proved that for a closed manifold $M$ of dimension $n\geqslant 4$, the normalized Ricci flow starting at a pointwise strongly $1/4$-pinched metric converges to a metric with constant sectional curvature $1$. In dimension $3$ this same result holds by the work of Hamilton in \cite{Hamilton1982}.

Consider a finite subgroup $\Gamma < \mathrm{O}(n+1)$ acting freely on the unit round sphere. 
A \emph{spherical space form} is the quotient manifold $S^n/\Gamma$. Since $\Gamma$ is a subgroup of isometries of the standard round metric $\tilde{\sigma}$ of $S^n$, the standard round metric on $S^n$ induces  a distinguished round metric $\sigma$ on $S^n/\Gamma$. We refer to this metric as the \emph{standard round metric on $S^n/\Gamma$}.

As in the case of sphere bundles, for a smooth fiber bundle with fiber a spherical space form, if the fibers are pointwise strongly $1/4$-pinched, then the normalized Ricci flow gives a continuous fiberwise deformation of the fiberwise metrics to a round metric (not necessarily the standard one). This leads to the following fiberwise version of \cite[Theorem~1]{BrendleSchoen2009}:

\begin{prop}\label{P: pointwise stron 1/4-pinched flows to round metric}
Let $F=S^n/\Gamma$ be an $n$-dimensional spherical space form, and $\pi\colon E\to B$ a smooth $F$-fiber bundle over a locally compact space $B$. Assume that the bundle admits strongly $1/4$-pinched fiberwise metrics, then the bundle admits fiberwise round metrics.

\end{prop}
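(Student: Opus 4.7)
The plan is to apply Brendle--Schoen \cite{BrendleSchoen2009} (supplemented by Hamilton \cite{Hamilton1982} when $n=3$) fiberwise, and then to upgrade pointwise convergence to continuity of the resulting family. Fix a local trivialization $\alpha_i\colon p^{-1}(U_i)\to U_i\times F$. The hypothesis provides a continuous map $U_i\to \Met^{1/4}(F)$, $b\mapsto (\alpha_i)_* g_b$. For each $b\in U_i$, let $t\mapsto g_b(t)$ be the normalized Ricci flow starting at $(\alpha_i)_* g_b$. By Brendle--Schoen (or Hamilton if $n=3$), this flow is defined for all $t\geq 0$ and converges in the $C^{\infty}$-topology to a metric $g_b^{\infty}\in\Met^{1}(F)$ of constant sectional curvature one. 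Set $h_i(b):=g_b^{\infty}$.

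The technical crux is continuity of $b\mapsto h_i(b)$ as a map $U_i\to \Met^{1}(F)$. For any fixed time $T>0$, continuous dependence of $g_b(T)$ on the initial datum in the $C^{\infty}$-topology follows from standard parabolic theory for the normalized Ricci flow. The passage $T\to\infty$ uses the quantitative convergence in Brendle--Schoen: after a sufficiently long time the flow enters a $C^{\infty}$-neighborhood of $\Met^{1}(F)\subset\Met(F)$ on which it is exponentially attracted to its limit, so that the assignment (near-round metric) $\mapsto$ (round limit) is continuous. Composing these two steps gives continuity of $h_i$ on $U_i$; since $B$ is locally compact the argument is purely local and no global compactness of $B$ is required.

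Finally one must verify that the local families $h_i$ agree on overlaps. For $b\in U_i\cap U_j$ the transition diffeomorphism
\[
\alpha_{ij}(b)\colon (F,(\alpha_i)_* g_b)\longrightarrow (F,(\alpha_j)_* g_b)
\]
is an isometry. Because the normalized Ricci flow is natural with respect to diffeomorphisms, $\alpha_{ij}(b)$ intertwines the flow in the $\alpha_i$-trivialization with the flow in the $\alpha_j$-trivialization, and hence $\alpha_{ij}(b)_* h_i(b)=h_j(b)$. Thus the $h_i$ glue to a global continuous assignment $b\mapsto g_b^{\infty}$ which is a fiberwise metric on $E$ whose value on each fiber is round.

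The main obstacle is the $T\to\infty$ step: continuity of the limit map from initial $1/4$-pinched metrics to round metrics. This is essentially a repackaging of the exponential stability built into the Brendle--Schoen convergence theorem, but it is the point requiring the most care; the remaining ingredients (short-time continuous dependence, naturality of Ricci flow, and continuity of transition isometries) are classical.
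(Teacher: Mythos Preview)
Your argument is correct and follows the same overall strategy as the paper --- run the normalized Ricci flow fiberwise, invoke Brendle--Schoen/Hamilton for convergence, and establish continuity of the limit --- but the execution differs in two places worth noting.

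First, the paper does not run the flow directly on $F$. Instead it pulls the fiberwise metrics back along the universal cover $S^n\to F=S^n/\Gamma$, runs the flow on $S^n$, and then cites \cite[Theorem~1]{FarrellGangKnopfOntaneda2017} as a black box for continuity of the map $\Met^{1/4}(S^n)\to\Met^1(S^n)$ on compact parameter sets (this is where local compactness of $B$ is actually used). The round limits on $S^n$ are then pushed down to $F$ using that the Ricci flow preserves the $\Gamma$-isometries. Your approach works directly on $F$ and sketches the continuity argument (finite-time continuous dependence plus exponential stability near $\Met^1$) rather than citing it; this is more self-contained and, as you observe, does not really require compactness of the chart, but it does re-prove what \cite{FarrellGangKnopfOntaneda2017} already packaged.

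Second, you make explicit the gluing step on overlaps via naturality of the flow under the transition isometries $\alpha_{ij}(b)$. The paper's proof is purely local and leaves this step to the reader; your treatment here is cleaner.
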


\begin{proof}
Locally, the continuous fiberwise family of Riemannian metrics of $\pi\colon E\to B$ is given by a continuous map $F\colon U\to \Met^{1/4}(S^n/\Gamma)$, where 
$U\subset B$ is a open set over which the fiber bundle is trivial.
Since $B$ is locally compact, we can assume w.l.o.g.\ that $\bar U$ is compact and the bundle is also trivial over $\bar U$.
Via the universal cover $p\colon S^n\to S^n/\Gamma$ we obtain a continuous map $\widetilde{F}\colon \bar U\to \Met^{1/4}(S^n)$. The metric $\widetilde{F}(b)$ on $S^n$ is the pull-back of $F(b)$ along $p$. Consider the map $F^\bullet\colon \bar U\to \Met^1(S^n)$ that sends $\widetilde{F}(b)$ to its limit $\widetilde{F}^\bullet(b)$ under the normalized Ricci flow.  Theorem~1 in \cite{FarrellGangKnopfOntaneda2017} implies that this map is continuous on the compact set $\bar U$, hence on $U$. Furthermore, for each fixed $b\in B$, by construction the action of $\Gamma<\mathrm{O}(n+1)$ on $S^n$ is by isometries with respect to the metric $\widetilde{F}(b)$. Since the normalized Ricci flow preserves isometries (see \cite{AndrewsHopper}), 
this implies that the action of $\Gamma$ on $S^n$ is by isometries with respect to $\widetilde{F}^\bullet(b)$. Thus we obtain a new Riemannian metric $F^\bullet(b)$ on $S^n/\Gamma$.

Recall that two metrics in $\Met(S^n/\Gamma)$ are close in the Whitney topology if for any atlas, the matrix coefficients are uniformly close. Since $\widetilde{F}^\bullet $ depends continuously on $b$, and $p\colon (S^n,\widetilde{F}^\bullet(b))\to (S^n/\Gamma,F^\bullet(b))$ is a local isometry, this implies that $F^\bullet\colon \bar U\to \Met^1(S^n/\Gamma)$ is a continuous function.

\end{proof}


For the sake of completeness, we recall  \cite[Lemma~1]{FarrellGangKnopfOntaneda2017}, which will be used in Section~\ref{S: spherical space forms}.

\begin{lem}[Lemma~1 in \cite{FarrellGangKnopfOntaneda2017}]
\label{L: every round metric is isometric to round in a continuous way}
Let $\widetilde{\sigma}$ denote the standard round metric on $S^n$. There is a continuous map
\[
	\phi\colon \Met^{1}(S^n)\to \Diff(S^n),
\]
such that $\phi(g)\colon (S^n,g)\to (S^n,\widetilde{\sigma})$ is an isometry, for any round metric $g\in \Met^{1}(S^n)$.
\end{lem}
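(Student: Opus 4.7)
The plan is to build $\phi(g)$ explicitly from the exponential map of $g$, after transporting tangent data at a fixed basepoint to the standard frame in a continuous fashion.

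First I would fix once and for all a basepoint $p_0 \in S^n$ together with a $\widetilde{\sigma}$-orthonormal basis $(e_1,\ldots,e_n)$ of $T_{p_0}S^n$. For any $g \in \Met^{1}(S^n)$, the inner product $g_{p_0}$ on $T_{p_0}S^n$ is obtained from $\widetilde{\sigma}_{p_0}$ by a positive-definite symmetric transformation depending continuously on $g$ (in the $C^\infty$ topology). Running the Gram--Schmidt procedure on the fixed frame $(e_1,\ldots,e_n)$ with respect to $g_{p_0}$ produces, continuously in $g$, a $g_{p_0}$-orthonormal basis $(e_1^g,\ldots,e_n^g)$ of $T_{p_0}S^n$. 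Define
\[
  L_g\colon (T_{p_0}S^n, g_{p_0})\longrightarrow (T_{p_0}S^n,\widetilde{\sigma}_{p_0}),\qquad L_g(e_i^g):=e_i;
\]
this is a linear isometry, and $g\mapsto L_g$ is continuous.

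Next, since $(S^n,g)$ and $(S^n,\widetilde{\sigma})$ are complete and have constant sectional curvature $1$, the exponential maps $\exp^g_{p_0}$ and $\exp^{\widetilde{\sigma}}_{p_0}$ restrict to diffeomorphisms from the open balls of radius $\pi$ (in the respective inner products on $T_{p_0}S^n$) onto the complements of the unique antipodal points $-_{g}p_0$ and $-p_0$. I would then set
\[
  \phi(g) := \exp^{\widetilde{\sigma}}_{p_0}\circ L_g\circ\bigl(\exp^g_{p_0}\bigr)^{-1}
\]
on $S^n\setminus\{-_{g}p_0\}$, and extend by sending $-_{g}p_0$ to $-p_0$. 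Because $L_g$ is a linear isometry and both exponential maps factor through the quotient of the closed ball of radius $\pi$ by its boundary (which is diffeomorphic to $S^n$), the extension is a smooth diffeomorphism. Cartan's theorem on local isometries of constant curvature spaces, applied with the linear isometry $L_g$ as prescribed differential at $p_0$, shows that $\phi(g)$ is a global isometry $(S^n,g)\to(S^n,\widetilde{\sigma})$.

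Finally, continuity of $g\mapsto\phi(g)$ with respect to the $C^\infty$ Whitney topology on $\Diff(S^n)$ follows from three facts: the smooth dependence of the geodesic flow (hence of $\exp^g_{p_0}$) on the metric coefficients via standard ODE theory; the continuity of the inverse operation for diffeomorphisms parametrised by $g$; and the continuity of $g\mapsto L_g$ established above. The main subtlety is justifying smoothness of $\phi(g)$ at the antipode and continuous dependence there. I expect this to be the chief technical point, but it is handled uniformly because on a constant curvature $1$ sphere the cut locus of $p_0$ degenerates to a single point and $\exp^g_{p_0}$ has the standard ``polar'' normal form near this point; consequently $\phi(g)$ is locally the standard identification between two such polar charts, and this identification depends continuously on $g$.
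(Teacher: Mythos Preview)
The paper does not actually prove this lemma; it is merely recalled from \cite{FarrellGangKnopfOntaneda2017}. That said, your argument is correct and is essentially the same construction the paper carries out for the $\R P^n$ analogue (Theorem~\ref{T: All round projective spaces are isometric to the standard one}): fix a basepoint, Gram--Schmidt a fixed $\widetilde{\sigma}$-frame with respect to $g$ to obtain a linear isometry of tangent spaces depending continuously on $g$, invoke Cartan's theorem to produce the global isometry, and deduce continuity from the smooth dependence of the exponential map and the Gram--Schmidt process on the metric.

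One small simplification worth noting: instead of defining $\phi(g)$ by the explicit formula $\exp^{\widetilde{\sigma}}_{p_0}\circ L_g\circ(\exp^g_{p_0})^{-1}$ on the complement of the $g$-cut locus and then arguing separately about smoothness at the antipode $-_g p_0$, you can invoke Cartan's theorem directly: since $(S^n,g)$ and $(S^n,\widetilde{\sigma})$ are simply connected, complete, and of constant sectional curvature $1$, there is a \emph{unique global} isometry fixing $p_0$ with differential $L_g$. This is exactly the map your formula produces, but quoting the theorem spares you the discussion of the polar normal form near the antipode, which is the only place your write-up is a bit informal.
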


We observe that, for a round metric $g$ with isometry group $\Iso(g)$, the map $\phi(g)$ is in general not $\Iso(g)$-equivariant. Given any finite subgroup $\Gamma$ of $\mathrm{O}(n+1)$, acting freely and isometrically on the unit round sphere, and $f\in \Diff(S^n)$, we obtain a new effective representation  $\rho_f\colon \Gamma \hookrightarrow \Diff(S^n)$, defined as follows: For any $\gamma\in \Gamma$ and $v\in S^n$ we set
\[
	\rho_f(\gamma)(v) = f\big(\gamma\big(f^{-1}(v)\big)\big).
\]

\begin{remark}
In the case that such representation $\rho_f$ is again a subgroup of $\mathrm{O}(n+1)$, de~Rahm showed in \cite{deRahm1950} that in odd dimensions $\rho_f$ has the same character as the inclusion $\Gamma\hookrightarrow \mathrm{O}(n+1)$. This implies that there is some $h_f\in \mathrm{O}(n+1)$,  such that for all $\gamma \in \Gamma$   we have
\[
	h_f^{-1}\gamma h_f = \rho_f(\gamma).
\]
In other words, the map $h_f^{-1}\circ f\in \Diff(S^n)$ is $\Gamma$-equivariant. When we consider the maps $h_{\phi(g)}^{-1}\circ \phi(g)$, we get an equivariant map. It is not clear, however, whether this map is continuous with respect to the metric $g$.
\end{remark}

 \section{General spherical space form case}\label{S: spherical space forms}
%

Let $p\colon E\to B$ and $\hat p\colon \hat E\to B$ be two smooth fiber bundles, $\hat F$ and $F$ their fibers over a specific base point $b\in B$, and $\pi\colon \hat F \to F$ a covering.\\
We call a covering $\pi\colon \hat E \to E$ a \emph{covering of the bundle $E\to B$ extending the covering $\pi\colon \hat F \to F$} if the following diagram commutes:

%
%
%
\begin{equation}\label{E: Comm. Diagram covering of fibrations}
\begin{tikzcd}
\hat F \arrow{r}{i} \arrow{d}{\pi} & \hat{E} \arrow[bend left]{dr}{\hat{p}} \arrow{d}{\pi} & \\
F \arrow{r}{i} & E \arrow{r}{p} & B
\end{tikzcd}
\end{equation}

\begin{remark}
We will only consider  the universal cover $\pi \colon \widetilde{F}\to F$, and given a fiber bundle $p\colon E\to B$ with fiber $F$, we denote a covering bundle of $\widetilde{F}\to F$ by $\widetilde{p}\colon \widetilde{E}\to B$. Note that $\widetilde{E}$ is, in general, not the universal cover of $E$, since the fundamental group of $\widetilde{E}$ is isomorphic to the fundamental group of $B$.
\end{remark}

Let $F$ be an $n$-dimensional spherical space form. Denote by $I$ the isometry group  $\mathrm{Isom}(F,\sigma)$ of $F$ with respect to the standard round metric $\sigma$, and by $G$ the structure group of a smooth fiber bundle $F\to E\overset{p}{\to} B$. For the universal cover $\pi\colon S^n\to F$, let $G^\ast$ denote the group of diffeomorphisms of $S^n$ which  consists of all the lifts of elements in $G$ .
Observe that there is a natural continuous group homomorphism $\psi\colon G^\ast\to G$.

%

The bundle $p\colon E\to B$ admits a smooth covering fiber bundle $\widetilde{p}\colon \widetilde{E}\to B$ extending the universal cover $\pi \colon S^n \to F$ if there exists a continuous group homomorphism $c\colon G\to G^\ast$ such that $\psi\circ c = Id_G$ (see \cite[Section~3]{BeckerGottlieb1973}). In particular, if $p\colon E\to B$ admits a pointwise strongly $1/4$-pinched fiberwise metric, so does $\widetilde{p}\colon\widetilde{E} \to B$. From the Main Theorem in \cite{FarrellGangKnopfOntaneda2017}, the structure group of $\widetilde{p}\colon \widetilde{E}\to B$ is $\mathrm{O}(n+1)$. We give sufficient conditions to guarantee that this reduction descends to $p\colon E\to B$.

\begin{remark}\label{R: how to make things equivariant}
Recall that for a local trivialization $\widetilde{\alpha}_i\colon\widetilde{p}^{-1} (U_i) \to U_i\times S^n$ of the bundle $\widetilde p\colon \widetilde E\to B$ with round fiberwise metrics, the map $\phi$ given by Lemma~\ref{L: every round metric is isometric to round in a continuous way} induces a map $\phi_i\colon U_i\to \Diff(S^n)$ as follows:
\[
	\phi_i(b) = \phi((\widetilde{\alpha}_i)_\ast\,g_b),
\]
where $g_b$ is the metric on the fiber $\widetilde{p}^{-1}(b)$.
\end{remark}

\begin{lem}\label{L: covering bundle with fiberwise round metrics}
Let $F$ be an $n$-dimensional spherical space form with fundamental group $\Gamma$. 
Let $F\to E\overset{p}{\to} B$ be a smooth fiber bundle over a locally compact space $B$, with fiberwise round metrics.
Assume that there exists a covering bundle $S^n\to \widetilde{E}\overset{\widetilde{p}}{\to} B$ extending  $\pi \colon S^n\to F$. Consider  the continuous maps $\phi_i\colon U_i\to \Diff(S^n)$ from Remark~\ref{R: how to make things equivariant}. 
If $\phi_i(U_i)\subset N_{\Diff(S^n)}(\Gamma)$ for all indices $i$, then the structure group of $p\colon E\to B$ reduces to the isometry group of $F$ with respect to the standard round metric.
\end{lem}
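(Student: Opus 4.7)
The plan is to use the maps $\phi_i$ from Remark~\ref{R: how to make things equivariant} to first \emph{straighten} the fiberwise metric on $\widetilde{E}$ to $\widetilde{\sigma}$ in new local trivializations $\widetilde{\beta}_i$ of the sphere bundle, and then to push these trivializations down through the covering $\pi\colon \widetilde{E}\to E$ to local trivializations $\beta_i$ of $E$ whose transition functions take values in $I=\Isom(F,\sigma)$. The normalizer hypothesis is precisely what renders the pushdown well-defined.

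First, I would choose compatible local trivializations $\alpha_i\colon p^{-1}(U_i)\to U_i\times F$ of $E$ and $\widetilde{\alpha}_i\colon \widetilde{p}^{-1}(U_i)\to U_i\times S^n$ of $\widetilde{E}$ such that $\alpha_i\circ\pi = (\mathrm{id}_{U_i}\times\pi)\circ\widetilde{\alpha}_i$. In these coordinates the deck $\Gamma$-action on $\widetilde{p}^{-1}(U_i)$ is the standard $\Gamma$-action on the $S^n$-factor, and a brief diagram chase shows that the sphere-bundle transitions $\alpha_{ji}(b)\in\Diff(S^n)$ are $\Gamma$-equivariant. Next I would define
\[
\widetilde{\beta}_i(x)\;=\;\bigl(\widetilde{p}(x),\ \phi_i(\widetilde{p}(x))\bigl(\mathrm{pr}_2\widetilde{\alpha}_i(x)\bigr)\bigr).
\]
Lemma~\ref{L: every round metric is isometric to round in a continuous way} implies that $\widetilde{\beta}_i$ pushes the fiberwise metric forward to $\widetilde{\sigma}$ on every fiber; consequently the sphere-bundle transitions become $\widetilde{\beta}_j\circ\widetilde{\beta}_i^{-1}(b,\cdot) = \phi_j(b)\circ\alpha_{ji}(b)\circ\phi_i(b)^{-1}\in \Isom(S^n,\widetilde{\sigma}) = \mathrm{O}(n+1)$, continuously in $b\in U_i\cap U_j$.

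Finally, I would define $\beta_i\colon p^{-1}(U_i)\to U_i\times F$ by the requirement $\beta_i\circ\pi = (\mathrm{id}_{U_i}\times\pi)\circ\widetilde{\beta}_i$. For this to make sense the right-hand side must be $\Gamma$-invariant under the deck action, which in the $\widetilde{\beta}_i$ coordinates reads $\gamma\mapsto \phi_i(b)\gamma\phi_i(b)^{-1}$; by the normalizer hypothesis this conjugated element again lies in $\Gamma$, so the standard quotient $\pi\colon S^n\to F$ identifies these orbits correctly. A parallel computation using the $\Gamma$-equivariance of $\alpha_{ji}$ then shows that $\widetilde{\beta}_j\circ\widetilde{\beta}_i^{-1}$ descends to a well-defined $\beta_j\circ\beta_i^{-1}$, which is the quotient of an element of $\Isom(S^n,\widetilde{\sigma})$ by the local Riemannian covering $\pi\colon (S^n,\widetilde{\sigma})\to (F,\sigma)$ and therefore belongs to $\Isom(F,\sigma) = I$. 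Continuity of the resulting cocycle in the $C^\infty$ Whitney topology is inherited from $\phi_i$, $\phi_j$, and $\alpha_{ji}$. The main obstacle is exactly this descent step: the $\Gamma$-action in the $\widetilde{\beta}_i$ coordinates is conjugated by the $b$-dependent map $\phi_i(b)$, while the quotient $\pi\colon S^n\to F$ is fixed, and the normalizer condition is precisely the hypothesis that reconciles this tension.
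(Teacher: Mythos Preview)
Your argument is correct and follows essentially the same route as the paper: both use the compatible trivializations $\widetilde{\alpha}_i$, form the new transitions $\phi_j(b)\circ\widetilde{\alpha}_{ij}(b)\circ\phi_i(b)^{-1}\in\mathrm{O}(n+1)$, and invoke the normalizer hypothesis to push these down to $\Isom(F,\sigma)$. Your descent step is in fact slightly more carefully phrased than the paper's (which says the $\phi_i(x)$ ``leave the orbits invariant'' when what is really meant---and what the normalizer condition actually gives---is that they map $\Gamma$-orbits to $\Gamma$-orbits).
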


\begin{proof}
Since $p\colon E\to B$ has a family of fiberwise round metrics Riemannian metrics, so does $\widetilde{p}\colon \widetilde{E}\to B$. By \cite{FarrellGangKnopfOntaneda2017}, the structure group of $\widetilde{p}\colon \widetilde{E} \to B$ reduces to $\mathrm{O}(n+1)$. We recall how such a reduction is obtained. Fix  a local trivalization  $\{(U_i,\alpha_i)\}$ of $p\colon E\to B$. Denote by $\widetilde{\sigma}$ the standard round metric on $S^n$, and consider  the trivialization $\widetilde{\alpha}_i\colon \widetilde{p}^{-1}(U_i)\to  U_i\times S^n$ induced by $\alpha_i\colon p^{-1}(U_i)\to U_i\times F$. Fix $x\in U_i\cap U_j$ and consider
\[
	\phi_i(x) \colon \left(S^n,(\widetilde{\alpha}_i)_\ast\,\widetilde{g}_x \right) \to \left(S^n,\widetilde{\sigma} \right).
\]
This is the isometry described in Lemma~\ref{L: every round metric is isometric to round in a continuous way}. Since $\phi_i(x)$ and $\alpha_{ij}(x)$ are isometries, then by definition the map $\widetilde{\beta}_{ij}\colon U_i\cap U_j\to \Diff(S^n)$ given by
\[
	\widetilde{\beta}_{ij}(x) = \phi_j(x)\circ(\widetilde{\alpha}_{ij}(x))\circ \phi_i(x)^{-1},
\]
is an element of $\mathrm{O}(n+1)$. Observe that the hypothesis $\phi_i(x)\in N_{\Diff(S^n)}(\Gamma)$ implies that both maps  $\phi_i(x)$ and $\phi_j(x)$ leave the orbits invariant, i.e.\ a point $q$ in the orbit $\Gamma(p)$ is mapped to another point in $\Gamma(p)$. This is also true for $\widetilde{\alpha}_{ij}(x)$ by construction. Thus, $\widetilde{\beta}_{ij}(x)$ maps orbits to orbits. This implies that for any $x\in U_i\cap U_j$, the map $\widetilde{\beta}_{ij}(x)$ induces  an isometry $\beta_{ij}(x) \colon \left(F,\sigma\right)\to (F,\sigma)$, i.e.\ we have a reduction to the isometry group of $(F,\sigma)$.
\end{proof}

\begin{proof}[Proof of Theorem~\ref{Theorem: Covering}]
By Proposition~\ref{P: pointwise stron 1/4-pinched flows to round metric}, the fiber bundle $E\to B$ admits round fiberwise metrics. The result follows from Lemma~\ref{L: covering bundle with fiberwise round metrics}.
\end{proof}

\subsection{Obstructions to lifting spherical space form bundles to sphere bundles}\hspace*{\fill}\\

Given a fibration $p\colon E\to B$ with fiber $F = p^{-1}(b)$, for some fixed $b\in B$, and a covering $\pi\colon \hat E\to E$, the map $\hat{p} \coloneqq p\circ \pi \colon \hat E \to B$ is also a fibration, with fiber $\hat F = \hat p^{-1}(b)$.
If $\hat F$ is connected, the restriction $\pi\colon \hat F \to F$ is then also a covering, and we call $\pi\colon \hat E\to E$ a \emph{covering of the fibration $E\to B$ extending the covering $\hat F\to F$}. In this case, diagram \eqref{E: Comm. Diagram covering of fibrations} commutes.

A smooth covering fiber bundle is a particular example of a covering fibration, but in general not every covering fibration is a smooth fiber bundle. The following theorem gives sufficient and necessary conditions for the existence of a covering fibration for the universal cover $\pi\colon \widetilde{F}\to F$.

\begin{thm}[Theorem~1 in \cite{BeckerGottlieb1973}]
\label{T:Obstructions to Covering fibrations}

Let $p\colon E\to B$ be a fibration with fiber $F$ and $\pi\colon \widetilde{F}\to F$ be the universal cover. Then, a covering $\widetilde{E} \to E$ of the fibration $E\to B$ extending $\widetilde{F}\to F$
exists if and only if  the following two conditions are satisfied:
\begin{enumerate}
\item $i_\ast\colon \pi_1(F)\to \pi_1(E)$ is injective and,
\item $p_\ast\colon \pi_1(E) \to \pi_1(B)$ has a right inverse homomorphism.
\end{enumerate}
\end{thm}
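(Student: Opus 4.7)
The plan is to translate the geometric existence problem for $\widetilde E\to E$ into a subgroup condition inside $\pi_{1}(E)$ via the classical correspondence between connected covers and subgroups of the fundamental group, and then to read off conditions (1) and (2) from the long exact sequence of the fibration. First I would note that since the universal cover $\widetilde F$ is assumed to exist, $F$ is path-connected, so the long exact sequence
\[
  \cdots \to \pi_{2}(B) \xrightarrow{\partial} \pi_{1}(F) \xrightarrow{i_{\ast}} \pi_{1}(E) \xrightarrow{p_{\ast}} \pi_{1}(B) \to \pi_{0}(F)
\]
gives that $p_{\ast}$ is surjective and that $i_{\ast}(\pi_{1}(F)) = \ker(p_{\ast})$ is normal in $\pi_{1}(E)$.

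For the forward direction I would assume that a covering $\pi\colon \widetilde E\to E$ extending $\widetilde F\to F$ exists and set $N := \pi_{\ast}(\pi_{1}(\widetilde E))\leq \pi_{1}(E)$. By the standard description of the restriction of a cover to a path-connected subspace, the components of $\pi^{-1}(F)$ are in bijection with the double cosets $N\backslash \pi_{1}(E)/i_{\ast}(\pi_{1}(F))$, and the component containing the basepoint is classified by the subgroup $i_{\ast}^{-1}(N)\leq \pi_{1}(F)$. Requiring $\pi^{-1}(F) = \widetilde F$ (connected and simply connected) then amounts to
\[
  N\cdot i_{\ast}(\pi_{1}(F)) = \pi_{1}(E) \qquad\text{and}\qquad i_{\ast}^{-1}(N) = \{1\}.
\]
The second condition immediately forces $i_{\ast}$ to be injective — any $\alpha\in\ker i_{\ast}$ would satisfy $i_{\ast}(\alpha)=1\in N$, hence $\alpha\in i_{\ast}^{-1}(N) = \{1\}$ — which is (1); combined with (1) it also yields $N\cap i_{\ast}(\pi_{1}(F)) = \{1\}$. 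Together with the first condition this makes $p_{\ast}|_{N}\colon N\to \pi_{1}(B)$ an isomorphism, whose inverse is the right-inverse homomorphism asserted in (2).

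For the converse I would reverse this chain of equivalences. Given a splitting $s\colon \pi_{1}(B)\to \pi_{1}(E)$ of $p_{\ast}$, set $N := s(\pi_{1}(B))$; then elementary group theory gives $N\cap \ker(p_{\ast}) = \{1\}$ and $N\cdot \ker(p_{\ast}) = \pi_{1}(E)$, which combined with injectivity of $i_{\ast}$ from (1) translate to $i_{\ast}^{-1}(N) = \{1\}$ and $N\cdot i_{\ast}(\pi_{1}(F)) = \pi_{1}(E)$. Let $\pi\colon \widetilde E\to E$ be the connected cover classified by $N$. Then $\widetilde p := p\circ \pi$ is again a fibration (composition of a covering with a fibration), and its fiber over the basepoint is $\pi^{-1}(F)$, which by the double-coset description is connected with trivial fundamental group — hence is the universal cover $\widetilde F\to F$.

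The main technical obstacle I anticipate is the bookkeeping around the double-coset description of $\pi^{-1}(F)$: precisely identifying its components with double cosets, identifying the basepoint component with $i_{\ast}^{-1}(N)$, and carefully tracking basepoints. Under mild niceness hypotheses on $E$ and $F$ (path-connected and locally nice enough for covering-space theory, both harmless in the smooth bundle setting of this paper) these identifications are routine, and once they are in place the rest of the argument is a short group-theoretic calculation inside $\pi_{1}(E)$.
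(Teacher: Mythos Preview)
The paper does not give its own proof of this statement; it is quoted verbatim as Theorem~1 of Becker--Gottlieb and used only as a black box (to produce Examples~\ref{Ex: Example1} and~\ref{Ex: Example}). So there is nothing in the paper to compare your argument against.

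That said, your proposal is correct. Translating the existence of the cover $\widetilde E\to E$ into the existence of a subgroup $N\leq\pi_1(E)$ with $N\cdot i_\ast(\pi_1(F))=\pi_1(E)$ and $i_\ast^{-1}(N)=\{1\}$, via the double--coset description of the components of $\pi^{-1}(F)$ and the identification of the basepoint component with the subgroup $i_\ast^{-1}(N)\leq\pi_1(F)$, is exactly the right mechanism; from there both directions reduce, as you say, to the elementary fact that a complement to $\ker p_\ast$ in $\pi_1(E)$ is the same thing as a section of $p_\ast$. The only caveats are the ones you already flag: one needs $E$ (and $F$) to be nice enough for covering--space theory to apply, and one should keep track of basepoints when invoking the double--coset count. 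In the setting of this paper (smooth bundles with compact fiber over locally compact base) these are harmless.
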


With this characterization, we show that there are smooth fiber bundles with spherical space form as fibers which do not 
satisfy the hypothesis of Theorem~\ref{Theorem: Covering} relating to the existence of a covering bundle $S^n\to \tilde{E}\to B$.

\begin{ex}\label{Ex: Example1}
Consider $\SO(3) \to E \SO(3) \to B \SO(3)$  the universal bundle of $\SO(3) \approx \R P^3$. From the long exact sequence of the universal bundle, we see that $\pi_{k+1}(B\SO(3))\cong \pi_k(\SO(3))$, and in particular $\pi_3(B\SO(3))=\pi_2(\SO(3))=0$ and $\pi_2(B\SO(3))=\pi_1(\SO(3))=\Z_2$. Since $\pi_1(E\SO(3))$ is the trivial group, then the  map $i^\ast\colon \pi_1(\SO(3))=\Z_2\to \pi_1(E\SO(3))$ is not injective. Thus by Theorem~\ref{T:Obstructions to Covering fibrations} there does not exist a covering fibration extending the universal cover $S^3\to \R P^3 \approx \SO(3)$.
\end{ex}

From this previous example we can give more examples of spherical space form bundles which do not admit a covering fibration.

\begin{ex}\label{Ex: Example}
For any simply-connected smooth $4$-dimensional manifold $M$, with $H_2(M,\Z) = \Z^ n$, we  construct a smooth fiber bundle $\R P^3 \to E \to M$ which does not have a covering fibration $S^3\to \widetilde{E} \to M$ extending the universal cover $S^3\to \R P^3$, and hence it does not admit a covering bundle.

Recall from  the proof of Theorem 1.2.25 in \cite{GompfStipsicz1999}, 
that every simply-connec\-ted smooth $4$-manifold $M$ with $H_2(M,\Z) \cong \Z^n$ is homotopy equivalent to a CW-complex of the form
\[
M \simeq \bigg(\bigvee_{i=1}^n S^2\bigg) \cup_{S^3} D^4,
\]
where the boundary of $D^4$ is attached to the wedge of spheres via a gluing map $F\colon S^3 \to \bigvee_{i=1}^n S^2$. Since $\Z_2\cong \pi_2(B\SO(3))$, we can choose non-null homotopic continuous maps $g_i\colon S^2\to B\SO(3)$. We define a map $G\colon \bigvee_{i=1}^n S^2 \to B\SO(3)$ by $G= \bigvee_{i=1}^n g_i$.
Composing the attaching map $F$ with $G$ we get $[G\circ F]\in \pi_3(B\SO(3))=0$. Thus we can extend $G$ to $D^4$
and we get a continuous map $M\to B\SO(3)$, which induces a principal $\SO(3)$-bundle $E\to M$.

Comparing the long exact sequence of homotopy groups of  $\SO(3)\to E\to M$ with the one of the classifying bundle $\SO(3)\to E\SO(3)\to B\SO(3)$, via the map $G\circ F$, we get  the following commutative diagram:
\begin{equation*}
\begin{tikzcd}
 \pi_2(E) \ar{r} & \pi_2(M) \ar{r}{\delta_2}\ar{d}{G_\ast} & \pi_1(\SO(3)) \ar{r}{i_\ast}\ar{d}{\cong} & \pi_1(E) \ar{r}\ar{d} & 1\\
0 \ar{r} & \pi_2(B\SO(3)) \ar{r}{\cong} & \pi_1(\SO(3)) \ar{r}{i_\ast} & 1 \ar{r} & 1\\
\end{tikzcd}
\end{equation*}
Since the map $G_\ast\colon \pi_2(M)\to \pi_2(B\SO(3))$ is surjective by construction, we see that the connecting homomorphism $\delta_2\colon \pi_2(M)\to\pi_1(\SO(3))$ is surjective. This implies that the induced map $i_\ast\colon \pi_1(\SO(3))\to \pi_1(E)$ is the trivial group homomorphism. Again, by Theorem~\ref{T:Obstructions to Covering fibrations} this principal $\SO(3)$-bundle cannot admit a covering fibration extending the cover $S^3\to \mathbb{R}P^3$.
\end{ex}

%
 \section{$\R P^n$-bundles}
%

In light of Example~\ref{Ex: Example}, we show that the Main Result in \cite{FarrellGangKnopfOntaneda2017} still holds for smooth $\R P^n$-bundles with fiberwise pointwise strongly $1/4$-pinched metric, i.e.\ we prove Theorem~\ref{Theorem: Main theorem}. We begin by proving an analogous statement to Lemma~\ref{L: every round metric is isometric to round in a continuous way}

\begin{thm}\label{T: All round projective spaces are isometric to the standard one}
Let $g$ be a round metric on $\R P^n$ and let $\sigma$ denote the metric induced by the Riemannian covering $(S^n,\widetilde \sigma)\to (\R P^n, \sigma)$, where $\widetilde \sigma$ denotes the standard round metric on $S^n$. Then there is an isometry $\phi_g\colon (\R P^n, g) \to (\R P^n, \sigma)$. Moreover, $\phi_g$ depends on $g$ in a continuous way.
\end{thm}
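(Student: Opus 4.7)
My plan is to reduce the theorem to the analogous sphere statement, Lemma~\ref{L: every round metric is isometric to round in a continuous way}, via the universal Riemannian cover $p\colon (S^n,\widetilde\sigma)\to (\R P^n,\sigma)$, whose nontrivial deck transformation is the antipodal map $a\colon v\mapsto -v$. Given a round metric $g$ on $\R P^n$, its pullback $\widetilde g\coloneqq p^\ast g$ is a round metric on $S^n$, and the assignment $g\mapsto \widetilde g$ is continuous from $\Met^1(\R P^n)$ to $\Met^1(S^n)$. Composing with the map furnished by Lemma~\ref{L: every round metric is isometric to round in a continuous way}, one obtains a continuous family $\psi_g\in \Diff(S^n)$ with $\psi_g\colon (S^n,\widetilde g)\to (S^n,\widetilde\sigma)$ an isometry.

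The core of the argument is to show that $\psi_g$ is $\Z_2$-equivariant, i.e.\ $\psi_g\circ a=a\circ \psi_g$. Since $\widetilde g=p^\ast g$ is a pullback along $p$, the map $a$ is an isometry of $(S^n,\widetilde g)$. Hence the conjugate $\tau\coloneqq \psi_g\circ a\circ \psi_g^{-1}$ is an isometry of $(S^n,\widetilde\sigma)$, so $\tau\in \mathrm{O}(n+1)$. Moreover $\tau$ is clearly a fixed-point-free involution. Any involution in $\mathrm{O}(n+1)$ is diagonalizable over $\R$ with eigenvalues in $\{\pm 1\}$, and freeness on $S^n$ rules out the $+1$-eigenspace entirely; thus $\tau=-I=a$, as desired.

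Equivariance lets us descend $\psi_g$ to a well-defined bijection $\phi_g\colon \R P^n\to \R P^n$, $[x]\mapsto[\psi_g(x)]$, making the diagram
\[
\begin{tikzcd}
(S^n,\widetilde g) \arrow{r}{\psi_g} \arrow{d}{p} & (S^n,\widetilde\sigma) \arrow{d}{p} \\
(\R P^n,g) \arrow{r}{\phi_g} & (\R P^n,\sigma)
\end{tikzcd}
\]
commute. Since the vertical arrows are Riemannian coverings and $\psi_g$ is an isometry, $\phi_g$ is a bijective local isometry, hence an isometry. Continuous dependence of $\phi_g$ on $g$ follows from that of $\psi_g$, since locally $\phi_g=p\circ \psi_g\circ (p\vert_V)^{-1}$ on any evenly covered $V\subset \R P^n$, and the $C^\infty$ Whitney topology is defined in local charts.

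The main obstacle is precisely the equivariance step: this automatic equivariance is special to the antipodal action, since the only fixed-point-free involution in $\mathrm{O}(n+1)$ is $-I$. For a general freely acting $\Gamma<\mathrm{O}(n+1)$ one cannot conclude that $\psi_g\circ \gamma\circ \psi_g^{-1}=\gamma$, which matches the authors' remark that only $\R P^n$-bundles admit an unconditional conclusion while general spherical space form bundles require the normalizer hypothesis of Theorem~\ref{Theorem: Covering}.
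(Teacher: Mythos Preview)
Your proof is correct and shares the paper's overall architecture: lift $g$ to $S^n$, produce an isometry to $(S^n,\widetilde\sigma)$, show it commutes with the antipodal map, and descend. The details differ in two places. First, for the upstairs isometry the paper does not invoke Lemma~\ref{L: every round metric is isometric to round in a continuous way} as a black box; it constructs $\widetilde\phi$ explicitly via Cartan's theorem, fixing the north pole $N$ and sending a Gram--Schmidt $\widetilde g$-orthonormal frame at $N$ to the standard $\widetilde\sigma$-frame, which makes continuous dependence on $g$ transparent (both Gram--Schmidt and the exponential map vary continuously with the metric). Second, and more substantively, the equivariance arguments are different: the paper reasons geometrically that for any $p\in S^n$ there are two distinct shortest $\widetilde g$-geodesics from $p$ to $-p$ (one obtained from the other by applying $-\Id$ and reversing), and their images under $\widetilde\phi$ are then two distinct shortest $\widetilde\sigma$-geodesics from $\widetilde\phi(p)$ to $\widetilde\phi(-p)$, which forces $\widetilde\phi(-p)=-\widetilde\phi(p)$. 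You instead observe that $\psi_g\circ a\circ\psi_g^{-1}\in \mathrm{O}(n+1)$ is a fixed-point-free involution and hence equals $-I$. Your algebraic argument is cleaner and makes explicit why $\R P^n$ is special among spherical space forms (the conjugate of $-I$ is forced to be $-I$ because it is the unique fixed-point-free involution, so the normalizer hypothesis of Theorem~\ref{Theorem: Covering} is automatic), while the paper's geodesic argument is more self-contained and does not rely on the unspecified construction inside Lemma~\ref{L: every round metric is isometric to round in a continuous way}.
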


\begin{proof}
Denote by $\widetilde{g}$ the lift of $g$ to $S^n$. Consider $N$ to be the \enquote{north pole} of $S^n$, and let $\{e_i\}$ be the standard orthonormal basis of $(T_N(S^n),\widetilde{\sigma})$ given by the inclusion $S^n\subset \R^{n+1}$.
Via the Gram-Schmidt algorithm, we obtain an orthonormal basis $\{\widetilde{e}_i\}$ on $(T_N(S^n),\widetilde{g})$. Denote by $i\colon T_N(S^n)\to T_N(S^n)$ the change of basis. Then, by Cartan's Theorem (see \cite[Chapter 8]{doCarmo}), there exists a unique isometry $\widetilde{\phi}\colon (S^n,\widetilde{g})\to (S^n,\widetilde{\sigma})$ such that $\widetilde{\phi}(N) = N$ and $\mathrm{D}_N \widetilde \phi (\widetilde{e}_i) = e_i$ for $i=1,2,\dots, n$.
%

We now show that the isometry $\widetilde{\phi}$ is equivariant under the action of $\Z_2\cong \{Id,-Id\}$. To see this, we show that the cut locus of any point $p\in S^n$ consists of its antipodal point $-p$, with respect to the metric $\widetilde{g}$ as well as $\widetilde{\sigma}$.
Fix $p\in M$ and consider a shortest geodesic $c$ from $p$ to $-p$ with respect to the metric $\widetilde g$. Then the curve $(-Id)\circ c$ is, after reversing its direction, another shortest geodesic from $p$ to $-p$ with the same length. We observe that these two geodesics cannot be identical since otherwise the middle point of $c$ would be fixed by $-Id$.
The images of these curves via the isometry $\widetilde{\phi}$ give two different shortest geodesics from $\widetilde{\phi}(p)$ to $\widetilde{\phi}(-p)$ with respect to the standard round metric $\widetilde \sigma$. Clearly, this is only possible if $\widetilde{\phi}(-p) = -\widetilde{\phi}(p)$.

This implies $\widetilde \phi \circ (-Id) = (-Id)\circ \widetilde \phi$, so the isometry $\widetilde{\phi}$ is equivariant under the group action of $\Z_2$ and therefore induces an isometry $\phi\colon \R P^n \to \R P^n$.

We end the proof by showing that $\phi_g$ depends continuously on the metric $g$. Consider two metrics $g_1$ and $g_2$ on $\R P^n$ close to each other with respect to $C^\infty$ Whitney-topology. Since $\phi_{g_1}$ and $\phi_{g_2}$ are given by the exponential maps, as well as the Gram-Schmidt orthonormalization process with respect to $g_1$ and $g_2$, then the isometry $\phi_{g_1}$ is close to the isometry $\phi_{g_2}$ in $\Diff(\R P^n)$ with respect to the  $C^\infty$ Whitney topology (see \cite[Section~2.6]{CorroKordass2019}).

\end{proof}

With this we are able to give the proof of Theorem~\ref{Theorem: Main theorem}. Note that the proof is analogous to the proof of the Main Theorem in \cite{FarrellGangKnopfOntaneda2017}.

\begin{proof}[Proof of Theorem~\ref{Theorem: Main theorem}]
Let $p\colon E\to B$ be a smooth fiber bundle whose fibers are diffeomorphic to $\R P^n$. Assume that $E$ is equipped with a  pointwise strongly $1/4$-pinched fiberwise Riemannian metric $\{g_b\}_{b\in B}$. We will show that given any trivializing bundle charts, we can give transition maps which are isometries of the standard round metric. of $\R P^n$.

Take an atlas of trivializing bundle charts $\{(U_i,\alpha_i\}_{i\in J}$ of $E$, such that the closure $\bar{U}_i$ is compact (since $B$ is locally compact, such an atlas exists). Taking $U_{ij} = U_i\cap U_j$ and $\alpha_{ij}=\alpha_j\circ \alpha_i^{-1}\colon U_{ij}\times \R P^n \to U_{ij}\times \R P^n$, we have
\[
	\alpha_{ij}(b,v) = (b, \alpha_{ij}(b) (v) ),
\]
and by definition of a fiberwise Riemannian metric, for any $b\in U_{ij}$ the map $\alpha_{ij}(b) \colon (\R P^n,(\alpha_i)_\ast\, g_b )\to  (\R P^n,(\alpha_j)_\ast\, g_b )$ is an isometry.

By Proposition~\ref{P: pointwise stron 1/4-pinched flows to round metric} evolving the fiberwise metric via the normalized Ricci flow we obtain a  fiberwise Riemannian metric $\{g^{\bullet}_b\}_{b\in B}$ on $E$ such that each Riemannian metric $g^{\bullet}_b$ is a round metric. Thus we obtain continuous maps $G_i\colon U_i \to \Met^1(\R P^n)$ given by
\[
	G_i(b)  = (\alpha_{i})_\ast\, g_b^\bullet.
\]
We now consider the continuous map $\phi\colon \Met^{1}(\R P^n)\to \Diff(\R P^n)$ given by Theorem~\ref{T: All round projective spaces are isometric to the standard one}. Composing with $G_i$ we obtain a continuous map $\phi\circ G_i\colon U_i\to \Diff(\R P^n)$. We take the diffeomorphism $f_{i}\colon U_{ij}\times \R P^n\to  U_{ij}\times \R P^n$ defined as
\[
	f_i(b,v) = (b,\phi\circ G_i (b)(v)).
\]
For any pair of indices $i,j\in J$ we define the diffeomorphism $\beta_{ij}\colon  U_{ij}\times \R P^n\to U_{ij}\times \R P^n$ as
\[
\beta_{ij}(b,v) = f_j\circ\alpha_{ij}\circ f^{-1}_i (b,v).
\]
Thus by construction the following diagram commutes:
\begin{center}
\begin{tikzcd}
U_{ij}\times \R P^n \ar{r}{f_{i}} \ar{d}{\alpha_{ij}}    & U_{ij}\times \R P^n  \ar{d}{\beta_{ij}}\\
U_{ij}\times \R P^n \ar{r}{f_{j}} & U_{ij}\times \R P^n
\end{tikzcd}
\end{center}
This implies that $p\colon E\to B$ is isomorphic as a fiber bundle to  the smooth fiber bundle $p'\colon E'\to B$ associated to the transition maps $\beta_{ij}$.

Fix $b\in U_{ij}\subset B$. Then
\[
	\beta_{ij} (b) = \phi_{(\alpha_j)_\ast\, g_b^\bullet} \circ \alpha_{ij}(b)\circ\left(\phi_{(\alpha_i)_\ast\, g_b^\bullet} \right)^{-1}\colon (\R P^n,\sigma)\to (\R P^n,\sigma)
\]
is by construction an isometry of the standard round metric $\sigma$. Thus the structure group of $p'\colon E'\to B$ is contained in the isometry group of $\sigma$, as claimed.
\end{proof}

%

%
%

\bibliographystyle{siam}

\nocite{Brendle2010}
\bibliography{Bibliography.bib}

\end{document}